\newcommand{\Pro}{\mathbb{P}}
\newcommand{\R}{\mathbb{R}}
\newcommand{\E}{\mathbb{E}}
\newcommand{\U}{\mathcal{U}}
\newcommand{\A}{\mathcal{A}}
\newcommand{\G}{\mathcal{G}}
\DeclareMathOperator*{\argmin}{arg\,min}
\newcommand\blfootnote[1]{%
  \begingroup
  \renewcommand\thefootnote{}\footnotetext{#1}%
  \addtocounter{footnote}{-1}%
  \endgroup
}
\newtheorem{theorem}{Theorem}[section]
\newtheorem{lemma}[theorem]{Lemma}
\newtheorem{remark}[theorem]{Remark}
\begin{document}

\title{Optimal Reinsurance for Gerber-Shiu Functions in the Cram\'{e}r-Lundberg Model}
\author{\normalsize M. Preischl and S. Thonhauser\thanks{The authors
are supported by the Austrian Science Fund (FWF) Project F5510 (part of the Special Research
Program (SFB) \textquotedblleft Quasi-Monte Carlo Methods: Theory and Applications\textquotedblright).
}}



\maketitle
\blfootnote{{\bf Keywords}: Dynamic reinsurance, optimal stochastic control, Gerber Shiu functions}
\index{Preischl, Michael}
\index{Thonhauser, Stefan}
\abstract{
Complementing existing results on minimal ruin probabilities, we minimize expected discounted penalty functions (or Gerber-Shiu functions) in a Cram\'{e}r-Lundberg model by choosing optimal reinsurance. Reinsurance strategies are modelled as time dependant control functions, which leads to a setting from the theory of optimal stochastic control and ultimately to the problem's Hamilton-Jacobi-Bellman equation. We show existence and uniqueness of the solution found by this method and provide numerical examples involving light and heavy tailed claims and also give a remark on the asymptotics. 
}
\section{Introduction and Preliminaries}
\subsection{Motivation}
The problem of choosing an optimal reinsurance contract has been a very active field inside actuarial mathematics for several years and numerous different frameworks have been considered in this context. The earlier works on this topic were inspired by \cite{waters1983some} where the idea is to maximize the adjustment coefficient to achieve the fastest decay rate for the ruin probability with increasing initial capital. While this approach is focused on the asymptotic behaviour and therefore results in a static reinsurance strategy, \cite{schmidli2001optimal, schmidli2002minimizing}, \cite{hipp2003optimal} and \cite{hipp2010optimal} considered dynamic control strategies, so the reinsurance policy can adapt to the evolution of the reserve process. A collection of results on optimal dynamic reinsurance can be found in \cite{schmidli2007stochastic}. Like the papers cited above, most authors working on dynamic reinsurance take the perspective of optimal stochastic control. A comprehensive summary of these methods in insurance mathematics is provided by \cite{azcue2014stochastic}.\\
Many different approaches can be made, depending on whether or not capital injections are considered, a diffusion term is added to the risk process and also which functional is to be optimized. For the latter question, the most popular choice is the ruin probability but other functionals are thinkable and interesting. For example \cite{azcue2005optimal} and \cite{cani2017optimal} ask for the strategy maximizing a dividend payoff and it is shown that results are qualitatively different from optimal strategies for minimizing the probability of ruin. In our manuscript, we will consider a quite general selection of functionals combined in the notion of discounted penalty functions, a concept that is widely used in many branches of insurance mathematics. 
\subsection{The model}
We consider a risk reserve process $(X_t)_{t\geq0}$ in the classical Cram\'{e}r-Lundberg model. That is, starting from some initial value $x$, the reserve process evolves over time subject to premium income and claim occurence. The claim arrivals are given by a Poisson process with intensity $\lambda$, i.e. there are $\lambda$ claims to be expected per unit time (equivalently, the expected inter claim time is $\frac1\lambda$). The claim heights are independent of this Poisson process and follow some continuous distribution $F_Y$ on $(0,\infty)$. Although not strictly necessary, we will in general assume that $F_Y$ has a density $f_Y$.\\
We assume that reinsurance can be obtained in the form of a control function $u$ in the following sense:\\
At each point in time $t$, a control parameter $u$ is chosen from a set $U$ (e.g. $U=[0,1]$). The map $u_t:\R^+\to U$ is called the reinsurance strategy and by $\mathcal{U}$ we denote the set of processes on $U$ that are previsible with respect to $\mathcal{F}^X$, the sigma algebra generated by the process $X_t$. The functions in $\U$ are called \textit{admissible control strategies}.\\
The effect of the reinsurance is modelled by the \textit{retention function} $r$: If a claim of height $y$ is encountered at time $t$, only the part $r(y,u_t)$ is to be paid by the insurer, the rest of the cost is transferred to the reinsurance company. Throughout the paper, we assume $r$ to be monotone in $y$ and continuous in $u$. Of course, reinsurance is not for free and so the reinsurance strategy also influences the reinsurance premiums and thus ultimately the premium income of the first insurer (in the following also called the cedent). Therefore, the premium rate at time $t$ is calculated as
\[
 c(u_t)=c-p(u_t),
\]
where $c$ denotes the cedent's premiums without reinsurance and $p(u_t)$ is the reinsurer's premium. These premiums can be calculated in several ways, including the \textit{expectation principle}, the \textit{variance principle} and the \textit{exponential principle} as some of the most popular ones.
Throughout this article, we want to assume that the reinsurance premium is in relation higher than the cedent's premium. So buying full reinsurance will result in a negative premium rate. \\
$ $\\
Combining these assumptions, we define the process $X_t^u$ controlled by the strategy $u\in\mathcal{U}$:
\[
 X_t^u=x+\int_0^t c(u_s)\,ds-\sum_{i=1}^{N_t}r(Y_i,u_{T_i}).
\]
Here, and in the rest of the paper, $N_t$ denotes the number of claims up to time $t$ and $T_i$ resp. $Y_i$ denotes the time resp. the height of the $i$-th claim.\\
$ $\\
Let $\tau_x^u:=\inf\{t\geq0:X_t^u\leq0|X_0^u=x\}$ denote the \textit{time of ruin}, i.e. the first point in time at which $X^u_t$ becomes negative. For convenience, we freeze the process after the ruin event, that is $X_t^u=X_{\tau_x^u}^u$ for all $t>\tau_x^u$. Following \cite{gerber1998time}, we are interested in discounted penalty functions (or \textit{Gerber-Shiu functions}) of the following form
\[
\Phi^u(x):=\E_x\left[e^{-\delta\tau_x^u}w(X_{\tau_x^u-}^u,|X_{\tau_x^u}^u|)\mathds{1}_{\tau_x^u<\infty}\right].
\]
Here, $X_{\tau_x^u-}^u$ is called \textit{surplus prior to ruin}, $|X_{\tau_x^u}^u|$ is the \textit{deficit at ruin} and $\delta>0$ is a discounting factor. Throughout this article, we demand that $w:\R^+\times\R^+\to\R^+$ is a continuous function. Given that we want to minimize the penalty, we are left with finding
\[
 V(x):=\inf_{u\in\mathcal{U}} \Phi^u(x),
\]
for $x>0$. We will also call $V(x)$ the \textit{value function}.
\subsection{Properties of the value function}
To conclude the preliminaries, we want to show two easy but important lemmas, giving monotonicity and, under mild conditions, Lipschitz continuity of $V$.
\begin{lemma}\label{monotonicity}
 $V(x)$ is strictly monotonously decreasing.
\end{lemma}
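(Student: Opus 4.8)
The plan is to turn into a rigorous argument the intuition that a larger initial surplus can only help, with the strict decrease coming entirely from the discount factor $\delta>0$. Fix $0<x_1<x_2$ and put $h:=x_2-x_1$. Since $w\geq0$, every $\Phi^u$ is nonnegative and hence so is $V$. Let $\hat u\in U$ be the full-reinsurance action (available in the model by assumption): by the meaning of full reinsurance $r(\cdot,\hat u)\equiv0$, and by the standing assumption $c(\hat u)<0$, so that $s_0:=h/|c(\hat u)|\in(0,\infty)$.

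The core step is a strategy-pasting construction. Given an arbitrary $u\in\mathcal{U}$, let $\bar u$ be the strategy for initial capital $x_2$ that uses the constant action $\hat u$ on $[0,s_0]$ and then ``restarts'' $u$ on $(s_0,\infty)$, i.e.\ applies $u$ to the time-shifted reserve path. Because $r(\cdot,\hat u)\equiv0$, on $[0,s_0]$ the controlled reserve equals the deterministic line $t\mapsto x_2-|c(\hat u)|\,t$, which decreases from $x_2$ to $x_1>0$ and is unaffected by claims; hence $\tau_{x_2}^{\bar u}>s_0$, the filtration generated by $X^{\bar u}$ is trivial on $[0,s_0]$ (so both pieces of $\bar u$ are previsible and $\bar u\in\mathcal{U}$), and, by memorylessness of the Poisson arrivals and time-homogeneity of the dynamics, $(X^{\bar u}_{s_0+t})_{t\geq0}$ has the same law as the process controlled by $u$ and started from $x_1$. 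Therefore
\[
\Phi^{\bar u}(x_2)=\E_{x_1}\!\left[e^{-\delta(s_0+\tau_{x_1}^{u})}\,w\big(X^{u}_{\tau_{x_1}^{u}-},\,|X^{u}_{\tau_{x_1}^{u}}|\big)\mathds{1}_{\tau_{x_1}^{u}<\infty}\right]=e^{-\delta s_0}\,\Phi^{u}(x_1),
\]
so $V(x_2)\leq\Phi^{\bar u}(x_2)=e^{-\delta s_0}\Phi^{u}(x_1)$ for every $u\in\mathcal{U}$. Taking the infimum over $u$ gives $V(x_2)\leq e^{-\delta s_0}V(x_1)\leq V(x_1)$, and since $\delta>0$ and $s_0>0$ the first inequality is strict whenever $V(x_1)>0$. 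Under the running hypotheses on $w$ (in particular $w\not\equiv0$), which together with the continuity of $F_Y$ force $V>0$ on $(0,\infty)$, this is precisely strict monotonicity; were $w\equiv0$ allowed, the statement would be vacuous and should be read on $\{V>0\}$.

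The only genuinely delicate point is the pasting/restart step: one must verify that $\bar u$ is a legitimate element of $\mathcal{U}$ and justify the displayed identity. This is a standard application of the strong Markov property of the piecewise-deterministic controlled process at the deterministic time $s_0$, using that nothing observable happens on $[0,s_0]$; I would either invoke the PDMP control framework of \cite{azcue2014stochastic} or carry out the pasting by hand. The remaining ingredients -- nonnegativity of $V$, finiteness and positivity of $s_0$, and previsibility of the constant piece -- are immediate.
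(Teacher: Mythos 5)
Your proof follows the same route as the paper: drift deterministically from $x_2$ down to $x_1$ by buying full reinsurance, then continue with a near-optimal strategy from $x_1$, picking up the discount factor $e^{-\delta s_0}$. Your version is more careful than the paper's one-line argument --- you replace ``the optimal strategy from there'' (which need not exist) by an infimum over arbitrary admissible $u$, you make explicit that full reinsurance means $r(\cdot,\hat u)\equiv 0$ so the pre-$s_0$ path is truly deterministic and the pasted strategy is previsible, and you correctly flag that strictness hinges on $V(x_1)>0$, a point the paper leaves implicit.
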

\begin{proof}
 Let $x>y$. Starting in $x$, buy continuously full reinsurance, resulting in some negative drift $\pi$. Hence, deterministically, after time $\frac{y-x}{\pi}$ the process reaches level $y$. Taking the optimal strategy from there means
\[
 V(x)\leq e^{-\delta\frac{y-x}{\pi}}V(y)<V(y).
\]
\end{proof}
\begin{remark}
 Since Lemma \ref{monotonicity} is a statement about the discounted penalty function of the \textit{optimally} controlled process, it is woth noting that monotonicity does not hold for an arbitrary control strategy.
\end{remark}

\begin{lemma}
 Assume that $w$ (and hence also $\Phi$ and $V$) is bounded by some constant $M$. Then $V(x)$ is Lipschitz continuous.
\end{lemma}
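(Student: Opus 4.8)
The plan is to exploit that, using no reinsurance, the reserve drifts upward deterministically at the strictly positive rate $c=c(u_{0})>0$ (where $u_{0}\in U$ denotes the ``no reinsurance'' control), so that from a lower starting level $y$ one can reach any higher level $x$ before the first claim and then proceed optimally, while on the complementary event an early claim occurs and the penalty is bounded crudely by $M$. Fix $x>y>0$. By Lemma~\ref{monotonicity}, $V(x)\le V(y)$, hence $|V(x)-V(y)|=V(y)-V(x)$, and it suffices to bound this difference from above by a constant multiple of $x-y$. Note also that $V\ge 0$, since $w\ge 0$.

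First I would put $t_{0}:=\frac{x-y}{c}$ and, for fixed $\varepsilon>0$, choose an $\varepsilon$-optimal strategy $u^{\ast}$ for initial capital $x$, i.e. $\Phi^{u^{\ast}}(x)\le V(x)+\varepsilon$. Then I would build an admissible strategy $\tilde u$ for initial capital $y$ by concatenation: apply the no-reinsurance control on the stochastic interval $[0,t_{0}\wedge T_{1})$; on $\{T_{1}\ge t_{0}\}$ the reserve is strictly increasing on $[0,t_{0}]$, reaches $X^{\tilde u}_{t_{0}}=x$ without ruin, and from time $t_{0}$ on one follows $u^{\ast}$ restarted at $x$; on $\{T_{1}<t_{0}\}$ one follows any admissible strategy after $T_{1}$. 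Pieced together at the stopping time $t_{0}\wedge T_{1}$, this process is previsible with respect to $\mathcal F^{X}$, so $\tilde u\in\U$.

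Next I would split the expectation defining $\Phi^{\tilde u}(y)$ over $\{T_{1}\ge t_{0}\}$ and $\{T_{1}<t_{0}\}$, whose probabilities are $e^{-\lambda t_{0}}$ and $1-e^{-\lambda t_{0}}$. On $\{T_{1}\ge t_{0}\}$ there is no ruin before $t_{0}$; by memorylessness of the Poisson process and the (strong) Markov property at the deterministic time $t_{0}$, the contribution of this event equals $e^{-\lambda t_{0}}e^{-\delta t_{0}}\Phi^{u^{\ast}}(x)\le e^{-(\lambda+\delta)t_{0}}\big(V(x)+\varepsilon\big)$. On $\{T_{1}<t_{0}\}$ I would use the pointwise bound $e^{-\delta\tau_{y}^{\tilde u}}w(X^{\tilde u}_{\tau_{y}^{\tilde u}-},|X^{\tilde u}_{\tau_{y}^{\tilde u}}|)\mathds{1}_{\tau_{y}^{\tilde u}<\infty}\le M$, so this contribution is at most $(1-e^{-\lambda t_{0}})M$. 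Adding the two and letting $\varepsilon\downarrow 0$ gives $V(y)\le e^{-(\lambda+\delta)t_{0}}V(x)+(1-e^{-\lambda t_{0}})M$; subtracting $V(x)$ and using $e^{-(\lambda+\delta)t_{0}}\le 1$, $V(x)\ge 0$ and $1-e^{-s}\le s$ yields
\[
V(y)-V(x)\le\big(e^{-(\lambda+\delta)t_{0}}-1\big)V(x)+(1-e^{-\lambda t_{0}})M\le(1-e^{-\lambda t_{0}})M\le\lambda t_{0}M=\frac{\lambda M}{c}\,(x-y),
\]
so $V$ is Lipschitz continuous with constant $\lambda M/c$.

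I expect the only delicate points to be of a bookkeeping nature: verifying that the concatenated control $\tilde u$ is indeed previsible (hence admissible), and cleanly justifying the fresh start at time $t_{0}$ on the no-claim event, which rests on the independence of the post-$t_{0}$ claim arrivals from $\mathcal F_{t_{0}}$ together with the time-homogeneity of the model. One should also make sure that the no-reinsurance control genuinely has drift $c>0$; if the model only guarantees that \emph{some} admissible control has a strictly positive drift $\kappa$, one uses that control instead, replacing $c$ by $\kappa$ and obtaining the Lipschitz constant $\lambda M/\kappa$.
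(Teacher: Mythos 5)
Your proposal is correct and follows essentially the same route as the paper: choose an $\varepsilon$-optimal strategy for the higher level $x$, climb deterministically from $y$ to $x$ using a control with strictly positive premium rate, and split on whether the first claim arrives before or after the climb, bounding the second piece crudely by $M$. The only cosmetic differences are that the paper phrases the climb via the hitting time $\theta_x$ (which coincides with your deterministic $t_0$ on the no-claim event) and selects a constant control satisfying the stronger net-profit condition $c(u)>\lambda\E[r(Y,u)]$ rather than merely $c(u_0)>0$; your final explicit extraction of the Lipschitz constant $\lambda M/c$ via $1-e^{-s}\le s$ is a small but welcome addition that the paper leaves implicit.
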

\begin{proof}
For every $x>0$ there is an $\varepsilon$-optimal strategy $\tilde{u}$ which fulfills
 \[
  V(x)\geq \Phi^{\tilde{u}}(x)-\varepsilon.
 \]
Let $y<x$ and let $u_t\equiv u$ be a constant control strategy such that the process has positive drift ($c(u)>\lambda \E[r(Y,u)]$). Now we denote the first hitting time of $x$ from $y$ by $\theta_x:=\inf\{t\geq0:X_t^u\geq x | X_0^u=y\}$ and define a new control strategy $\bar{u}=(\bar{u}_t^y)$ for the process starting in $y$ by $\bar{u}_t=u$ for $0\leq t\leq\theta_x$ and $\bar{u}_t=\tilde{u}_{t-\theta_x}$ for $t\geq\theta_x$.\\
We have
\begin{small}
\begin{align*}
 V(y)&\leq \Phi^{\bar{u}}(y)\leq\Pro\left(T_1 >\frac{x-y}{c(u)}\right)e^{-\delta\frac{x-y}{c(u)}}\Phi^{\tilde{u}}(x)+\Pro\left(T_1<\frac{x-y}{c(u)}\right) M\\
 &\leq e^{-(\delta+\lambda)\frac{x-y}{c(u)}}(V(x)+\varepsilon)+\left(1-e^{-\lambda\frac{x-y}{c(u)}}\right)M,
\end{align*}
\end{small}
which yields
\begin{footnotesize}
\begin{align*}
 |V(x)-V(y)|&=V(y)-V(x)\\
 &\leq V(x)\left(e^{-(\delta+\lambda)\frac{x-y}{c(u)}}-1\right)+\varepsilon e^{-(\delta+\lambda)\frac{x-y}{c(u)}}+\left(1-e^{-\lambda\frac{x-y}{c(u)}}\right)M
\end{align*}
\end{footnotesize}
Note that Lipschitz continuity implies absolute continuity of $V$.
\end{proof}
\section{Main Results}
Since we want to use the theory of stoachstic optimal control, it is crucial to show that the value function is a solution to the problem's Hamilton-Jacobi-Bellman equation (HJB). The proof follows similar arguments as the one of Lemma 3 in \cite{cani2017optimal}.
\begin{lemma}
 The value function $V(x)$ is on $(0,\infty)$ a.e. a solution to
 \begin{align}\label{HJB}
 \begin{split}
  0=\inf_{u\in U}&\left\{c(u)V'(x) -(\delta+\lambda)V(x)+\lambda\int_0^{\rho(x,u)}V(x-r(y,u))\,dF_Y(y)\right.\\
  &+\lambda\left.\int_{\rho(x,u)}^\infty w(x,r(y,u)-x)\,dF_Y(y)\right\}.
  \end{split}
 \end{align}
\end{lemma}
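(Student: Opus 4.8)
The plan is to deduce \eqref{HJB} from two one--sided dynamic--programming estimates combined with an infinitesimal generator computation, exploiting the absolute continuity of $V$ obtained above. Fix $h>0$. Gluing the constant control $u_s\equiv u\in U$ on $[0,h]$ with a measurably selected $\eta$--optimal continuation strategy, using $\Phi^{u'}\ge V$ for all $u'\in\U$ and $V=\inf_{u'}\Phi^{u'}$, and letting $\eta\downarrow0$, yields, for every $u\in U$,
\[
 V(x)\ \le\ \E^{u}_x\!\left[e^{-\delta(\tau\wedge h)}w\big(X_{\tau-},|X_{\tau}|\big)\mathds{1}_{\tau\le h}+e^{-\delta h}V(X_h)\,\mathds{1}_{\tau>h}\right].
\]
Conversely, applying the (strong) Markov property of the controlled process to an $\varepsilon$--optimal strategy $u^{\varepsilon}\in\U$ for the initial value $x$ (i.e.\ $\Phi^{u^{\varepsilon}}(x)\le V(x)+\varepsilon$) and bounding the shifted Gerber--Shiu function below by $V$ gives
\[
 V(x)+\varepsilon\ \ge\ \E^{u^{\varepsilon}}_x\!\left[e^{-\delta(\tau\wedge h)}w\big(X_{\tau-},|X_{\tau}|\big)\mathds{1}_{\tau\le h}+e^{-\delta h}V(X_h)\,\mathds{1}_{\tau>h}\right].
\]
I then fix a point $x>0$ at which $V$ is differentiable --- almost every $x>0$, by the preceding lemma under the standing boundedness of $w$ (which also gives $0\le V\le M$) --- and observe that, $c(\cdot)$ being bounded, for $h$ small enough $X^u_t>0$ on $[0,h]$ on the no--claim event, uniformly in the control; hence ruin on $[0,h]$ forces at least one Poisson claim.

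For the inequality $\inf_{u\in U}\{\cdots\}\ge0$ I insert the constant control $u\in U$ into the first estimate and expand the right--hand side by conditioning on the number of claims on $[0,h]$: the no--claim event contributes $e^{-(\delta+\lambda)h}V(x+c(u)h)$; the exactly--one--claim event contributes $\lambda h\,g(x,u)+o(h)$, with
\[
 g(x,u):=\int_0^{\rho(x,u)}V\big(x-r(y,u)\big)\,dF_Y(y)+\int_{\rho(x,u)}^{\infty}w\big(x,r(y,u)-x\big)\,dF_Y(y)
\]
and $\rho(x,u)$ the critical claim size, $r(\rho(x,u),u)=x$ (the $o(h)$ absorbs the claim time, the $O(h)$ drift displacement of the pre--ruin surplus, and the factors $e^{-\delta s},e^{-\lambda(h-s)}\to1$); the event of two or more claims contributes $o(h)$ because $w$ and $V$ are bounded. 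Subtracting $V(x)$, dividing by $h$ and letting $h\downarrow0$, differentiability of $V$ at $x$ gives $0\le c(u)V'(x)-(\delta+\lambda)V(x)+\lambda g(x,u)$; taking the infimum over $u\in U$ proves this half.

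For the reverse inequality $\inf_{u\in U}\{\cdots\}\le0$ I run the analogous expansion on the second estimate, now with the merely previsible, path--dependent control $u^{\varepsilon}$. Since $V\ge0$ I may simply drop the two--or--more--claims term, and the output is
\[
 V(x)+\varepsilon\ \ge\ e^{-(\delta+\lambda)h}\,\E\!\Big[V\big(x+{\textstyle\int_0^h c(u^{\varepsilon}_s)\,ds}\big)\Big]+\lambda\int_0^h\E\big[g(x,u^{\varepsilon}_s)\big]\,ds+o(h),
\]
where now the $o(h)$ must be \emph{uniform in the control}. As $|\int_0^h c(u^{\varepsilon}_s)\,ds|\le\mathrm{const}\cdot h$ and $V$ is differentiable at $x$, I linearise $V(x+\int_0^h c(u^{\varepsilon}_s)\,ds)=V(x)+V'(x)\int_0^h c(u^{\varepsilon}_s)\,ds+o(h)$; rearranging, dividing by $h$, and recognising $c(u)V'(x)-(\delta+\lambda)V(x)+\lambda g(x,u)$ as the bracket of \eqref{HJB} evaluated at $u$, I obtain
\[
 \tfrac{\varepsilon}{h}\ \ge\ \tfrac1h\int_0^h\E\big[c(u^{\varepsilon}_s)V'(x)-(\delta+\lambda)V(x)+\lambda g(x,u^{\varepsilon}_s)\big]\,ds+o(1)\ \ge\ \inf_{u\in U}\{\cdots\}+o(1).
\]
Choosing $\varepsilon=\varepsilon_h\downarrow0$ with $\varepsilon_h/h\to0$ and letting $h\downarrow0$ yields $\inf_{u\in U}\{\cdots\}\le0$, so that, with the previous paragraph, \eqref{HJB} holds at every point of differentiability of $V$, i.e.\ almost everywhere on $(0,\infty)$.

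The step I expect to be the main obstacle is the \emph{uniform--in--control} $o(h)$ estimate in the last displayed lower bound: one must show that the one--claim contribution equals $\lambda\int_0^h\E[g(x,u^{\varepsilon}_s)]\,ds$ up to an error $o(h)$ simultaneously for all admissible controls. This needs the Lipschitz bound on $V$ to absorb the $O(h)$ displacement $x\mapsto x+\int_0^s c(u^{\varepsilon}_r)\,dr$ in both the argument of $V$ and the threshold $\rho$; the continuity of $w$ (uniform on compacts) together with $0\le w\le M$ and truncation of large claims --- i.e.\ dominated convergence --- to absorb the $O(h)$ perturbations of the surplus prior to ruin and of the deficit; and the Poisson tail bound, which is control--independent since the strategy affects only claim sizes and not claim times, to make the two--or--more--claims term $o(h)$ in the first half. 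The remaining ingredients --- the gluing/Markov arguments with measurable near--optimal selections, the conditioning on the claim count, and the two limit passages --- are routine.
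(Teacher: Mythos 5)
Your proposal is correct and follows essentially the same route as the paper: both halves of the HJB equation are obtained from the two one-sided dynamic programming estimates (the upper one with a glued constant control, the lower one with a near-optimal control whose suboptimality $\varepsilon_h$ is chosen $o(h)$, matching the paper's $h^2$-optimal strategy), and the equation is read off at points of differentiability of the Lipschitz value function after dividing by $h$ and letting $h\downarrow 0$. The only stylistic difference is that you expand the expectations by conditioning on the number of claims on $[0,h]$ rather than invoking Dynkin's formula with the PDMP generator as the paper does (which in turn defers the $o(h)$ bookkeeping to Lemma~3 of \cite{cani2017optimal}); the content is the same, and you correctly single out the uniform-in-control remainder estimate in the one-claim term as the technical crux.
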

Here, $\rho(\cdot,u)=r(\cdot,u)^{-1}$ denotes the inverse of the retention function in the first component.
\begin{proof}
 We first show the $\leq$ part. Note that by continuity of $V$, the \textit{dynamic programming principle} holds, that is
 \begin{equation}\label{dynamic_pp}
  V(x)=\inf_{u\in\mathcal{U}} \E_x\left[e^{-\delta S}V(X_{S}^u)\mathds{1}_{S<\tau_x^u}+e^{-\delta \tau_x^u} w(X_{\tau_x^u-}^u,|X_{\tau_x^u}^u|)\mathds{1}_{S\geq\tau_x^u}\right],
 \end{equation}
 for every stopping time $S$. Next fix $x>0$, $h>0$ and $u\in U$ such that $c(u)>0$. 
 Consider the strategy $\hat{u}_t\equiv u$ for $t\in[0,h]$ and $\hat{u}_t=\tilde{u}_{t-h}$ for $t>h$ for some $\tilde{u}\in\mathcal{U}$. With $T_1$ again being the time of the first claim, set $S:=\min\{h,T_1\}$. Obviously, $S$ is a stopping time and the strategy $\hat{u}$ is constant in the time interval $[0,S]$. Setting $V(x)=0$ for $x<0$ and using \eqref{dynamic_pp}, we have
 \begin{align*}
  0\leq \E_{x}\left[e^{-\delta S} V(X_s^{\hat{u}})\right]-V(x)+\E_{x}\left[e^{-\delta \tau_{x}^{\hat{u}}}w(X_{\tau_{x}^{\hat{u}}-}^{\hat{u}},|X_{\tau_{x}^{\hat{u}}}^{\hat{u}}|)\mathds{1}_{S\geq\tau_{x}^{\hat{u}}}\right].
 \end{align*}
 Applying Dynkin's formula yields
 \begin{align*}
  0\leq \quad &\E_{x}\left[V(x)+\int_0^Se^{-\delta t}\left(\mathcal{A}^{\hat{u}}V(X_t^{\hat{u}})-\delta V(X_t^{\hat{u}})\right) dt\right]-V(x)\\
  &+\E_{x}\left[e^{-\delta \tau_{x}^{\hat{u}}}w(X_{\tau_{x}^{\hat{u}}-},|X_{\tau_{x}^{\hat{u}}}|)\mathds{1}_{S\geq\tau_{x}^{\hat{u}}}\right],
 \end{align*}
 where $\mathcal{A}^{\hat{u}}$ denotes the generator of the process $X_t^{\hat{u}}$, which, according to \cite{rolski2009stochastic}, Theorem 11.2.2, is given by
 \begin{equation}\label{generator}
  \A^{\hat{u}}g(x)=c(\hat{u}_t)g'(x)-\lambda g(x)+\lambda \int_0^\infty g(x-r(y,\hat{u}_t))\,dF_Y(y).
 \end{equation}
This leads to
\begin{align*}
 0\leq \E_{x}\left[\int_0^S e^{-\delta t}\Big( c(\hat{u}_t)V'(X_t^{\hat{u}})-(\delta+\lambda)V(X_t^{\hat{u}})\Big.\right.\\
 +\lambda\left.\int_0^{\rho(X_t^{\hat{u}},\hat{u}_t)}V(X_t^{\hat{u}}-r(y,\hat{u}_t)\,dF_Y(y)\Big.\Big)\right]\\
 +\E_{x}\left[e^{-\delta \tau_{x}^{\hat{u}}}w(X_{\tau_{x}^{\hat{u}}-},|X_{\tau_{x}^{\hat{u}}}|)\mathds{1}_{S\geq\tau_{x}^{\hat{u}}}\right].
 \end{align*}
Collecting the terms, dividing by $h$ and using that $\hat{u}=u$ for $t\in[0,S]$ gives
\begin{small}
\begin{align*}
 0\leq\,&\frac1h\,\E_{x}\left[\int_0^S e^{-\delta t}c(u)V'(x+c(u)t)\,dt\right]\\
 &+\frac1h\E_{x}\left[e^{-\delta T_1}w(x+c(u)T_1,|X_{T_1}^u|) \mathds{1}_{S\geq\tau_{x}^{\hat{u}}}\right]\\
 &+\frac1h\E_{x}\left[\int_0^S e^{-\delta t}\Bigg(-(\delta+\lambda)V(x+c(u)t)\Bigg.\right.\\
 &\left.\left.+\lambda\int_0^{\rho(x+c(u)t,u)}V(x+c(u)t-r(y,u))\,dF_Y(y)\right)dt\right].
\end{align*}
\end{small}
Having created an analogous situation as in the proof of Lemma 3 in \cite{cani2017optimal}, we can use the same arguments to deduce
\begin{align*}
 \begin{split}
  0\leq\inf_{u\in\mathcal{U}}&\left\{c(u)V'(x) -(\delta+\lambda)V(x)+\lambda\int_0^{\rho(x,u)}V(x-r(y,u))\,dF_Y(y)\right.\\
  &+\lambda\left.\int_{\rho(x,u)}^\infty w(x,r(y,u)-x)\,dF_Y(y)\right\}
  \end{split}
\end{align*}
which is the first half of the proof.\\
$ $\\
For the other direction, we fix $x>0$ and choose $h>0$ such that $x+\pi h>0$, where $\pi<0$ is again the premium under full reinsurance. Let $u^1$ be an $h^2$-optimal strategy for \eqref{dynamic_pp} and take again $S:=\min\{T_1,h\}$. Starting, as above, with \eqref{dynamic_pp}, we get
\begin{align*}
 0>&\,\,\E_x\left[e^{-\delta S} V(X_S^{u^1})\mathds{1}_{S<\tau_x^{u^1}}-V(x)\right]-h^2-\varepsilon h\\
 &+\E_x\left[e^{-\delta S} w(X_{\tau_x^{u^1}-}^{u^1},|X_{\tau_x^{u^1}}^{u^1}|)\mathds{1}_{S\geq\tau_x^{u^1}}\right].
\end{align*}
Conditioning on the time and height of the first claim and using the exponential distribution of the inter-claim times, this can be written as
\begin{small}
\begin{align*}
 0&> e^{-(\delta+\lambda)h}V\left(\tilde{x}_h\right)\\
 &+\E_x\left[\int_0^h\lambda e^{-(\delta+\lambda)t}\int_0^{\rho(\tilde{x}_t,u^1_t)}V\left(\tilde{x}_t-r(y,u^1_t)\right)\,dF_Y(y)\,dt\right]\\
 &+\E_x\left[\int_0^h\lambda e^{-(\delta+\lambda)t}\int_{\rho(\tilde{x}_t,u^1_t)}^\infty w\left(\tilde{x}_t,r(y,u^1_t)-\tilde{x}_t\right)\,dF_Y(y) \,dt\right] \\
 &-V(x)-h^2-h\varepsilon .
\end{align*}
\end{small}
Note that, to improve readability, we used the notational shortcuts $\tilde{x}_t:=x+\int_0^tc(u^1_s)ds$ and $\tilde{x}_h:=x+\int_0^hc(u^1_s)ds$.\\
$ $\\
At this point, we can again follow the proof of Lemma 3 in \cite{cani2017optimal} to deduce that
\begin{align*}
 0 > \,&c(u_0^1)V'(x)-(\delta+\lambda)V(x)+\lambda\int_0^{\rho(x,u_0^1)}V(x-r(y,u_0^1))\,dF_Y(y)\\
  &+\lambda\int_{\rho(x,u_0^1)}^\infty w(x,r(y,u_0^1)-x)\,dF_Y(y)-\varepsilon.
\end{align*}
And letting $\varepsilon\rightarrow 0$ completes the proof.
\end{proof}
Having shown that the value function is a solution to the HJB equation \eqref{HJB}, we now need to show that it is the only one (at least with some given analytical properties).
\section{Uniqueness of the solution to the HJB equation and Verification Statement}
Note that ruin can either occur by a claim that is bigger than the current reserve (claim ruin) or by decreasing the reserve with a negative premium until the reserve becomes negative (smooth ruin). Under certain conditions it can actually be advantageous to deliberately induce smooth ruin and thus choose the penalty $e^{-\delta\tau_x^u}w(0,0)$. Later, we will see that the possibility of smooth ruin causes changes in the analytical framework of the model.\\
$ $\\
Write $\mathcal{C}^{+,b}[0,\infty)$ for the set of positive, continuous and bounded functions on $[0,\infty)$ and define the operator $\G$ on $\mathcal{C}^{+,b}[0,\infty)$ as
\begin{align*}
 \G f(x):=&\inf_{u\in\mathcal{U}}\Big\{\E_x\left[e^{-\delta T_1}f(X_{T_1}^u)\mathds{1}_{T_1<\tau_x^u}\right]\Big.\\
 &+ \E_x\left[e^{-\delta T_1}w(X_{T_1-}^u,|X_{T_1}^u|) \mathds{1}_{T_1=\tau_x^u} \right]\\
 &+\left.\E_x\left[e^{-\delta \tau_x^u}w(0,0)\mathds{1}_{T_1>\tau_x^u}\right]\right\}. 
\end{align*}
\begin{lemma}\label{continuity_G}
  $\G f\in\mathcal{C}^{+,b}[0,\infty)$. Furthermore, $\G$ is a contraction on $\mathcal{C}^{+,b}[0,\infty)$.
\end{lemma}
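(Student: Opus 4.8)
The plan is the following. First I would exploit that, up to the first claim time $T_1$, the controlled reserve is a deterministic path and $\mathcal{F}^X$ is trivial, so the strategies in the infimum defining $\G f$ may be taken deterministic on $[0,T_1]$; conditioning on $T_1$ (exponential with rate $\lambda$) and on the independent claim size $Y_1$ then turns $\G f(x)$ into
\[
 \G f(x)=\inf_{u}\left\{\int_0^{\xi^{x,u}}\lambda\,e^{-(\lambda+\delta)t}\,\Psi^{x,u}_t\,dt+e^{-(\lambda+\delta)\xi^{x,u}}w(0,0)\right\},
\]
where $z^{x,u}_t:=x+\int_0^tc(u_s)\,ds$ is the pre-claim reserve, $\xi^{x,u}:=\inf\{t\geq0:z^{x,u}_t\leq0\}$ the (deterministic) time of smooth ruin, and $\Psi^{x,u}_t:=\int_0^{\rho(z^{x,u}_t,u_t)}f(z^{x,u}_t-r(y,u_t))\,dF_Y(y)+\int_{\rho(z^{x,u}_t,u_t)}^\infty w(z^{x,u}_t,r(y,u_t)-z^{x,u}_t)\,dF_Y(y)$; write $J(x,u)$ for the quantity in braces. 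Positivity of $\G f$ is then immediate from $f,w\geq0$, and for boundedness I would use that, $w$ being bounded by some $M$, each of the three payoffs occurring in the definition of $\G$ is at most $K:=\max\{\|f\|_\infty,M\}$, while the three indicators add up to one and the discount factors are $\leq1$; hence $0\leq\G f\leq K$.

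The only genuine work is the continuity of $\G f$. The point is that for a \emph{fixed} strategy $u$ the map $x\mapsto J(x,u)$ need not be continuous --- a strategy can steer the reserve down to $0$ and back up, so that the smooth-ruin time $\xi^{x,u}$, and with it the smooth-ruin contribution, jumps at some value of $x$ --- so continuity of the infimum has to be produced by perturbing the strategy, and I would use admissible steering strategies. Fix $0<y<x$. To bound $\G f(x)$ by $\G f(y)$ plus a vanishing term, start from $x$, buy full reinsurance (under which the reserve decreases deterministically at rate $|\pi|$, $\pi<0$, with no jumps) for the deterministic time $s_1:=(x-y)/|\pi|$, and then switch to an $\varepsilon$-optimal strategy for $\G f(y)$; since no jump occurs before $s_1$ and $T_1$ is memoryless, this strategy gives
\[
 \G f(x)\leq K\bigl(1-e^{-(\lambda+\delta)s_1}\bigr)+e^{-(\lambda+\delta)s_1}\bigl(\G f(y)+\varepsilon\bigr),
\]
whence $\G f(x)-\G f(y)\leq K(1-e^{-(\lambda+\delta)(x-y)/|\pi|})$ after $\varepsilon\downarrow0$. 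Symmetrically, starting from $y$, run a fixed strategy with positive drift $c_0$ until the level $x$ is reached (deterministic time $s_2:=(x-y)/c_0$, during which no smooth ruin occurs and a claim in that window contributes at most $K$), then switch to an $\varepsilon$-optimal strategy for $\G f(x)$; this gives $\G f(y)-\G f(x)\leq K(1-e^{-(\lambda+\delta)(x-y)/c_0})$. Combining the two estimates and using $1-e^{-a}\leq a$ shows $\G f$ is Lipschitz on $(0,\infty)$ with a constant depending only on $K,\lambda,\delta$ and $\min\{|\pi|,c_0\}$; together with the forced boundary value $\G f(0)=w(0,0)$ and the same steering argument up to the boundary, this yields $\G f\in\mathcal{C}^{+,b}[0,\infty)$.

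Finally, the contraction property follows from a short estimate: for $f_1,f_2\in\mathcal{C}^{+,b}[0,\infty)$ the $w$-terms in $\G f_1(x)$ and $\G f_2(x)$ coincide, so with the elementary inequality $|\inf_u a_u-\inf_u b_u|\leq\sup_u|a_u-b_u|$ one obtains
\[
 |\G f_1(x)-\G f_2(x)|\leq\sup_{u\in\U}\Bigl|\E_x\bigl[e^{-\delta T_1}(f_1-f_2)(X_{T_1}^u)\mathds{1}_{T_1<\tau_x^u}\bigr]\Bigr|\leq\|f_1-f_2\|_\infty\,\E\bigl[e^{-\delta T_1}\bigr]=\frac{\lambda}{\lambda+\delta}\,\|f_1-f_2\|_\infty,
\]
and $\lambda/(\lambda+\delta)<1$ because $\delta>0$. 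The main obstacle throughout is the continuity claim: it cannot be read off the integral representation directly but must be produced via the admissible steering strategies, whereas positivity, boundedness (given $w$ bounded) and the contraction bound are routine.
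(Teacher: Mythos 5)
Your proof is correct, and for the hard part --- continuity of $\G f$ --- you take a genuinely different and arguably cleaner route than the paper. For positivity, boundedness, and the contraction estimate your arguments coincide with the paper's (the paper also picks a near-minimizer of $\G f_2$, inserts it into $\G f_1$, and extracts $\E[e^{-\delta T_1}]<1$; you just make the bound $\lambda/(\lambda+\delta)$ explicit and invoke the $\inf$-comparison inequality). Where you diverge is continuity. The paper starts the process from $y$ with no reinsurance and from $x$ with an $\varepsilon$-optimal strategy $u_x$, waits for the two trajectories to \emph{merge} at a random time $\xi$ that depends on the whole path of $u_x$, and then carries out a fairly delicate case analysis according to whether $T_1<\xi$ or $T_1\geq\xi$, including the auxiliary time $t^*$ where the drifts first differ by at least $\varepsilon_c$. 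You instead steer over a \emph{fixed deterministic} time window: from $x$, full reinsurance for $s_1=(x-y)/|\pi|$ brings the pre-claim reserve down to $y$ (no smooth ruin since $y>0$), then switch to a near-optimal control for $y$; and symmetrically from $y$ up to $x$ with some positive-drift control. Because no claim in the window happens with probability $e^{-\lambda s_i}$, the exponential inter-claim time is memoryless, and the payoff on the complementary event is bounded by $K$ with discount $\leq 1$, you get
\[
 |\G f(x)-\G f(y)|\leq K\bigl(1-e^{-(\lambda+\delta)(x-y)/\min\{|\pi|,c_0\}}\bigr),
\]
which is a local Lipschitz bound, strictly stronger than the bare continuity the paper establishes. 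This buys you a much shorter argument and avoids the paper's path-merging machinery entirely; the paper's version, on the other hand, is closer in spirit to the Lipschitz argument it already used for $V$ in Section~1 and makes the role of the continuity assumptions on $r$, $c$, and $F_Y$ more explicit. Both are valid; yours is the more economical of the two.
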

\begin{proof}
Positivity and boundedness follow immediately, since $w$ is assumed to have these properties. Now let $f\in \mathcal{C}^{+,b}[0,\infty)$ and $x,y\in [0,\infty)$ with $x>y$. With the same argumentation as in Lemma \ref{monotonicity}, we get that $\G f$ is monotonously decreasing. Choose $u_x$ as an $\varepsilon$-optimal strategy in $\G f(x)$ and write $\G^{u_x} f(x)$ for the right hand side of $\G f(x)$, with the control strategy $u_x$\\
In the following, we consider the reserve process pathwise. Write $\prescript{}{z}X_t^{u}$ for the risk process at time $t$, started in $z$ and controlled by the strategy $u$. Let $u_0\in U$ be the parameter corresponding to no reinsurance and define $\xi:=\inf\{t:\prescript{}{y}X_t^{u_0}=\prescript{}{x}X_t^{u_x}\}$, so $\xi$ is the time when the process started in $y$ hits the path of the process started in $x$. Now set the strategy $(u_y)_t\equiv u_0$ for $t\in[0,\xi]$ and $(u_y)_t=(u_x)_t$ for $t>\xi$. We have
\begin{align*}
 |\G f(x)-\G f(y)|=\G f(y) -\G f(x) \leq \G^{u_y} f(y) -\G^{u_x} f(x) +\varepsilon.
\end{align*}
Obviously, denoting the time of ruin of the process started in $x$ and controlled by the strategy $u_x$ by $\prescript{}{x}\tau^{u_x}$, we have $\prescript{}{x}\tau^{u_x}\geq\prescript{}{y}\tau^{u_y}$. Expanding the above equation gives 
\begin{align*}
 \G^{u_y} f(y) -\G^{u_x} f(x) +\varepsilon = &\E\left[e^{-\delta T_1}f(\prescript{}{y}X_{T_1}^{u_y})\mathds{1}_{T_1<\prescript{}{y}\tau^{u_y}}\right]\\
 &+ \E\left[e^{-\delta T_1}w(\prescript{}{y}X_{T_1-}^{u_y},|\prescript{}{y}X_{T_1}^{u_y}|) \mathds{1}_{T_1=\prescript{}{y}\tau^{u_y}} \right]\\
 &+\E\left[e^{-\delta \tau^{u_y}}w(0,0)\mathds{1}_{T_1>\prescript{}{y}\tau^{u_y}}\right]\\
 &-\E\left[e^{-\delta T_1}f(\prescript{}{x}X_{T_1}^{u_x})\mathds{1}_{T_1<\prescript{}{x}\tau^{u_x}}\right]\\
 &- \E\left[e^{-\delta T_1}w(\prescript{}{x}X_{T_1-}^{u_x},|\prescript{}{x}X_{T_1}^{u_x}|) \mathds{1}_{T_1=\prescript{}{x}\tau^{u_x}} \right]\\
 &-\E\left[e^{-\delta \tau^{u_x}}w(0,0)\mathds{1}_{T_1>\prescript{}{x}\tau^{u_x}}\right] +\varepsilon. 
\end{align*}
After collecting terms, we see that
\begin{align*}
 \G^{u_y} f(y) -\G^{u_x} f(x)=&\,\,\E\left[e^{-\delta T_1}(f(\prescript{}{y}X_{T_1}^{u_y})-f(\prescript{}{x}X_{T_1}^{u_x}))\mathds{1}_{T_1<\prescript{}{y}\tau^{u_y}}\right]\\
 &-\E\left[e^{-\delta T_1}f(\prescript{}{x}X_{T_1}^{u_x})\mathds{1}_{\prescript{}{y}\tau^{u_y}=T_1<\prescript{}{x}\tau^{u_x}}\right]\\
 &+ \E\left[e^{-\delta T_1}w(\prescript{}{y}X_{T_1-}^{u_y},|\prescript{}{y}X_{T_1}^{u_y}|) \mathds{1}_{\prescript{}{y}\tau^{u_y}=T_1=\prescript{}{x}\tau^{u_x}} \right]\\
 &-\E\left[e^{-\delta T_1}w(\prescript{}{x}X_{T_1-}^{u_x},|\prescript{}{x}X_{T_1}^{u_x}|) \mathds{1}_{\prescript{}{y}\tau^{u_y}=T_1=\prescript{}{x}\tau^{u_x}} \right].
\end{align*}
Note that the terms for smooth ruin before $T_1$ cancel out, since in this setting smooth ruin is only possible, after the processes started in $x$ and $y$ have merged. At this point it is helpful to distinguish the cases $\xi\leq T_1$ and $\xi>T_1$, so whether or not the merge has already happened before the first claim. Considering the summands separately yields
\begin{align*}
&\E\left[e^{-\delta T_1}(f(\prescript{}{y}X_{T_1}^{u_y})-f(\prescript{}{x}X_{T_1}^{u_x}))\mathds{1}_{T_1<\prescript{}{y}\tau^{u_y}}\right]\\
&=\E\left[e^{-\delta T_1}(f(\prescript{}{y}X_{T_1}^{u_y})-f(\prescript{}{x}X_{T_1}^{u_x}))\mathds{1}_{T_1<\prescript{}{y}\tau^{u_y}}\mathds{1}_{\xi>T_1}\right]\\
&+\underbrace{\E\left[e^{-\delta T_1}(f(\prescript{}{y}X_{T_1}^{u_y})-f(\prescript{}{x}X_{T_1}^{u_x}))\mathds{1}_{T_1<\prescript{}{y}\tau^{u_y}}\mathds{1}_{\xi\leq T_1}\right]}_{=0}.
\end{align*}
We see that for $\xi\leq T_1$ the terms cancel out. To analyze what happens for $\xi>T_1$, take $\varepsilon_c>0$ and define
\[
t^*=\inf\{t:\exists\varepsilon_t>0:c(u_x(\tilde{t}))<c-\varepsilon_c\text{ for }\tilde{t}\in[t,t+\varepsilon_t]\}.
\]
In other words, at $t^*$ starts the first open interval where the drift of the process started in $x$ is by at least $\varepsilon_c$ smaller than the drift of the process started in $y$. For $|x-y|$ small enough, this interval in time will, even for arbitrarily small $\varepsilon_c$, be enough for $\prescript{}{y}X_{t}^{u_y}$ to reach the trajectory of $\prescript{}{x}X_{t}^{u_x}$ so we know $\xi\in[t^*,t^*+\varepsilon_t]$ with $\varepsilon_t\to0$ for $|x-y|\to0$. Now let us consider the first claim occurence $T_1$.
\begin{itemize}
\item{For $T_1<t^*$, the processes haven't merged yet, but their premium rates are at most $\varepsilon_c$ apart and since the premium is a continuous, strictly monotone function, their control strategies are at most $\delta_c$ apart. Since the retention function is also continuous in $u$, and $\varepsilon_c$ was arbitrary, we know that
$|f(\prescript{}{y}X_{T_1}^{u_y})-f(\prescript{}{x}X_{T_1}^{u_x})|\to0$ as $|x-y|\to0$.}
\item{If $t^*<T_1<\xi$, we cannot directly control the difference in the jump at $T_1$, but since we know that $\xi\in[t^*,t^*+\varepsilon_t]$ and because the distribution of $T_1$ is continuous, $\Pro(T_1\in[t^*,t^*+\varepsilon_t])$ goes to zero for $\varepsilon_t\to0$.}
\end{itemize}
Similarly, for the second summand, we see that
\begin{align*}
&\E\left[e^{-\delta T_1}f(\prescript{}{x}X_{T_1}^{u_x})\mathds{1}_{\prescript{}{y}\tau^{u_y}=T_1<\prescript{}{x}\tau^{u_x}}\right]\\
&=\E\left[e^{-\delta T_1}f(\prescript{}{x}X_{T_1}^{u_x})\mathds{1}_{\prescript{}{y}\tau^{u_y}=T_1<\prescript{}{x}\tau^{u_x}}\mathds{1}_{\xi<T_1}\right].
\end{align*}
Using the definition of $t^*$ as before, we have again two cases to consider.
\begin{itemize}
\item{For $T_1<t^*$ we already argued that the two paths of the process are arbitrarily close for $|x-y|$ being sufficiently small. Since for a claim that ruins the process started in $y$ but not the one started in $x$, we know that the claim height $Y_1$ must be in $[\prescript{}{y}X_{T_1-}^{u_y},\prescript{}{x}X_{T_1-}^{u_x}]$ and since the claim height distribution is assumed to be continuous, we deduce $\Pro(Y_1\in[\prescript{}{y}X_{T_1-}^{u_y},\prescript{}{x}X_{T_1-}^{u_x}])\to0$ for $|x-y|\to0$.}
\item{In the case $t^*<T_1<\xi$, we can use the same argumentation as above to reach the conclusion that $\Pro(T_1\in[t^*,t^*+\varepsilon_t])$ goes to zero for $|x-y|\to0$.}
\end{itemize}
A combination of the arguments we used so far and exploiting the continuity of $w$ will also send the remaining two summands to $0$, showing continuity of $\G f$.\\
$ $\\
It remains to show that $\G$ is a contraction on $\mathcal{C}^{+,b}[0,\infty)$. so let $f_1$, $f_2$ be positive, continuous and bounded and $u_1$,$u_2$ be their minimizing strategies in $\G$. We have
 \begin{footnotesize}
 \begin{align*}
  \G &f_1(x)-\G f_2(x)\\
  &=\inf_{u\in\U}\left\{\E_x\left[e^{-\delta T_1}f_1(X_{T_1}^u)\mathds{1}_{T_1<\tau^u}\right]+E_x\left[e^{-\delta T_1}w(X_{T_1-}^u,|X_{T_1}^u|)\mathds{1}_{T_1=\tau^u} \right]\right.\\
  &+\left.\E_x\left[e^{-\delta \tau^u}w(0,0)\mathds{1}_{T_1>\tau^u}\right]\right\}\\ 
  &-\inf_{u\in\U}\left\{\E_x\left[e^{-\delta T_1}f_2(X_{T_1}^u)\mathds{1}_{T_1<\tau^u}\right]+E_x\left[e^{-\delta T_1}w(X_{T_1-}^u,|X_{T_1}^u|)\mathds{1}_{T_1=\tau^u} \right]\right.\\
  &-\left.\E_x\left[e^{-\delta \tau^u}w(0,0)\mathds{1}_{T_1>\tau^u}\right]\right\}\\ 
  &\leq \E_x\left[e^{-\delta T_1}(f_1(X_{ T_1}^{u_2})-f_2(X_{ T_1}^{u_2}))\mathds{1}_{T_1<\tau^{u_2}}\right]\\
  &=\int_0^\infty \! e^{-\delta t} \lambda e^{-\lambda t} \int\limits_0^{\rho(X_{t-}^{u_2},u_2)} \!f_1(X_{t-}^{u_2}-r(y,u_2))-f_2(X_{t-}^{u_2}-r(y,u_2))dF_Y(y)dt\\
  &\leq \underbrace{\E\left[e^{-\delta T_1}\right]}_{< 1} ||f_1-f_2||_\infty.
 \end{align*}
 \end{footnotesize}
\end{proof}
From the definition of $\G$, we see that $\G V=V$ holds by the dynamic programming principle. In the following, we want to establish the connection between $\G$ and the HJB equation.
\begin{lemma}\label{fixed_point}
Let $f\in\mathcal{C}^{+,b}[0,\infty)$ be a solution to the HJB equation \eqref{HJB} with $f(0)\leq w(0,0)$. For $x\in(0,\infty)$ set 
\begin{align*}
u_f(x)=\argmin_{u\in U}&\left\{c(u)f'(x)-(\delta+\lambda)f(x)+\lambda\int_0^{\rho(x,u)}f(x-r(y,u))\,dF_Y(y)\right.\\
  &+\lambda\left.\int_{\rho(x,u)}^\infty w(x,x-r(y,u))\,dF_Y(y)\right\}.
\end{align*}
We complement the definition of $u_f$ by taking
\[
 u_f(0)=\begin{cases}
         u_f(0+)& \text{ if } f(0)< w(0,0)\\
         u^*&\text{ if } f(0)=w(0,0),
        \end{cases}
\]
where $u^*$ denotes the strategy of full reinsurance. Then $f$ is a fixed point of $\G$ and $u_f$ is the minimizing strategy.
\end{lemma}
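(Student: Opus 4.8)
The plan is to establish the two bounds
\[
 \G f(x)\ \ge\ f(x)\qquad\text{and}\qquad \G^{u_f}f(x)\ \le\ f(x)\qquad(x>0),
\]
where $\G^{u}f(x)$ denotes the quantity inside the infimum defining $\G f(x)$ evaluated at the strategy $u$. Since $\G f(x)=\inf_{u\in\U}\G^{u}f(x)\le\G^{u_f}f(x)$, these force $f(x)=\G f(x)=\G^{u_f}f(x)$, which is the assertion; the point $x=0$ is then recovered from continuity of $f$ and of $\G f$ (Lemma \ref{continuity_G}) together with the way $u_f(0)$ has been defined.

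First I would make $\G^{u}f$ explicit. Fix $x>0$ and $u\in\U$. Until the first claim the controlled reserve is the deterministic, Lipschitz curve $\tilde x_t=x+\int_0^t c(u_s)\,ds$; let $\sigma:=\inf\{t:\tilde x_t\le0\}\in(0,\infty]$ be the smooth-ruin time. Conditioning on the first claim time $T_1\sim\mathrm{Exp}(\lambda)$ and the first claim size and separating the events $\{T_1<\sigma\}$ (claim ruinous or not) and $\{T_1\ge\sigma\}$ (smooth ruin), one obtains
\begin{align*}
 \G^{u}f(x)=\int_0^{\sigma}\lambda e^{-(\delta+\lambda)t}\,I_{u_t}(\tilde x_t)\,dt\ +\ e^{-(\delta+\lambda)\sigma}w(0,0),
\end{align*}
the last term read as $0$ when $\sigma=\infty$, where
\begin{align*}
 I_v(z):=\int_0^{\rho(z,v)}f\big(z-r(y,v)\big)\,dF_Y(y)+\int_{\rho(z,v)}^{\infty}w\big(z,r(y,v)-z\big)\,dF_Y(y).
\end{align*}

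The core is a Dynkin-type (here: fundamental-theorem-of-calculus) computation along $\tilde x_t$. As $t\mapsto\tilde x_t$ is Lipschitz with $\dot{\tilde x}_t=c(u_t)$ a.e., the map $g(t):=e^{-(\delta+\lambda)t}f(\tilde x_t)$ is absolutely continuous, and at a.e.\ $t$
\begin{align*}
 g'(t)=e^{-(\delta+\lambda)t}\big(c(u_t)f'(\tilde x_t)-(\delta+\lambda)f(\tilde x_t)\big)\ \ge\ -\lambda e^{-(\delta+\lambda)t}I_{u_t}(\tilde x_t),
\end{align*}
the inequality being precisely \eqref{HJB} at the point $\tilde x_t$ for the control value $u_t\in U$, with \emph{equality} when $u_t=u_f(\tilde x_t)$. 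Integrating over $[0,\sigma)$ and using $\tilde x_\sigma=0$ with $f(0)\le w(0,0)$ if $\sigma<\infty$, resp.\ $e^{-(\delta+\lambda)t}f(\tilde x_t)\to0$ by boundedness of $f$ if $\sigma=\infty$, gives $f(x)\le\G^{u}f(x)$; taking the infimum over $u$ yields $\G f\ge f$. Running the same computation with the feedback strategy $u_t=u_f(\tilde x_t)$ turns the integrated estimate into an equality, so $\G^{u_f}f(x)=f(x)$, hence $\G f(x)=f(x)$ with $u_f$ the minimizer. For $x=0$: if $f(0)=w(0,0)$ the strategy $u^*$ of full reinsurance drives the reserve below zero at once, so $\G^{u^*}f(0)=w(0,0)=f(0)$, while $\G f(0)\ge f(0)$ follows by letting $x\downarrow0$ in $\G f(x)\ge f(x)$; if $f(0)<w(0,0)$, then $u_f(0)=u_f(0+)$ and $\G f(0)=f(0)$ follows from $\G f(x)=f(x)$ on $(0,\infty)$ and continuity of $f$ and $\G f$.

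The step I expect to be the main obstacle is the use of \eqref{HJB} ``at the point $\tilde x_t$ for a.e.\ $t$'', since \eqref{HJB} is only known for Lebesgue-a.e.\ $x$: along the curve $\tilde x_t$ one could in principle hit the exceptional $x$-set on a time-set of positive measure, which happens exactly on intervals where $c(u_t)$ vanishes and the curve stalls. The same regularity issue governs whether the feedback ODE $\dot{\tilde x}_t=c(u_f(\tilde x_t))$ admits a solution when $u_f$ is obtained only as a measurable selection of the argmin. I would handle this by choosing $u_f$ via a measurable selection theorem, establishing $\G f=f$ first on the set of starting points from which the (feedback) trajectory has a.e.\ nonzero speed — there a null $x$-set is met on a null time-set — and then extending the identity to all $x$ by continuity of $f$ and $\G f$; on a stalling segment $\{\tilde x_t\equiv z_0\}$ the estimate is recovered because \eqref{HJB} at $z_0$ reduces to $(\delta+\lambda)f(z_0)=\lambda I_{u_f(z_0)}(z_0)$, which is precisely the equality needed there.
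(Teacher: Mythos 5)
Your argument takes essentially the same route as the paper's: both sides are established by integrating the HJB inequality (resp.\ the equality for the minimizer) along the pre-jump deterministic flow and invoking $f(0)\le w(0,0)$ at the smooth-ruin boundary. The paper phrases the integration via Dynkin's formula plus a compensation-theorem step; you perform the same computation explicitly by conditioning on $T_1\sim\mathrm{Exp}(\lambda)$ and applying the fundamental theorem of calculus to $t\mapsto e^{-(\delta+\lambda)t}f(\tilde x_t)$. Your discussion of the a.e.\ subtlety is a worthwhile addition that the paper skips; your fix for a stalling segment is sound provided the inequality $(\delta+\lambda)f(z_0)\le\lambda I_{u_0}(z_0)$ is obtained by passing to the limit along nearby non-exceptional points rather than by invoking \eqref{HJB} directly at $z_0$, which may itself lie in the exceptional set.

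One caveat, which you share with the paper rather than introduce: your claim that ``running the same computation with $u_t=u_f(\tilde x_t)$ turns the integrated estimate into an equality'' is only true if either $\sigma=\infty$ or $f(0)=w(0,0)$. When $f(0)<w(0,0)$ and the feedback trajectory could still reach $0$ in finite time, the boundary step $e^{-(\delta+\lambda)\sigma}f(0)\le e^{-(\delta+\lambda)\sigma}w(0,0)$ remains strict, and you would only get $f(x)<\G^{u_f}f(x)$. The paper handles this by asserting $\mathds{1}_{T_1>\tau^{u_f}}=0$ whenever $c(u_f(0))\ge 0$, but does not actually show that $f(0)<w(0,0)$ forces $c(u_f(0+))\ge 0$ (this is motivated by the PDMP active-boundary remark). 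If you want your write-up to be self-contained you should either prove this sign condition from \eqref{HJB} near $0$, or state it explicitly as the same structural assumption the paper makes.
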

\begin{remark}
  In the above Lemma, we write $u_f(x)$ to indicate that we are working with a Markov control, i.e. solely dependent on the current state. 
Furthermore, the described choice of $u_f(x)$ happens in a measurable way, as can be seen from arguments similar to those of Lemma 2.12 in \cite{schmidli2007stochastic}.
\end{remark}
\begin{remark}
The aim of this section is to show that the function $f$ with the properties of Lemma \ref{fixed_point} actually is the value function $V$. So demanding that $f(0)\leq w(0,0)$ is a natural condition since it is certainly fulfilled by $V$. The definition of $u_f$ is also very intuitive as can be seen by the following consideration. Having $u_f(0)=u^*$ means a negative premium in zero and therefore the process can make the transition from ``alive'' to ``ruined'' without a jump. Interpreting the process $X_t^{u_f}$ as a piecewise deterministic Markov process (PDMP), this means the active boundary $\Gamma$ is not empty here, which, in the theory of PDMPs, goes along with the additional boundary condition $f(0)=w(0,0)$. Because smooth ruin is usually not considered in reinsurance scenarios where the ruin probability or dividend payments are to be optimized, it is an interesting feature of our model to (potentially) have $\Gamma\neq\emptyset$. For more details on this subject, we refer to Chapter $11$ of \cite{rolski2009stochastic}.
\end{remark}
\begin{proof}
 We start with the HJB equation
\begin{align*}
 0=\inf_{u\in U}&\left\{c(u)f'(x) -(\delta+\lambda)f(x)+\lambda\int_0^{\rho(x,u)}f(x-r(y,u))\,dF_Y(y)\right.\\
  +&\lambda\left.\int_{\rho(x,u)}^\infty w(x,x-r(y,u))\,dF_Y(y)\right\}.
\end{align*}
This holds for arbitrary $x$ and $f$ is certainly defined at all $X_t^{u}$ for $t\in[0,T_1\wedge\tau^{u}_x]$. Denoting the minimizing strategy by $u_f$ (which exists by the continuity of all involved functions) and using Dynkin's formula, we can write
\begin{small}
\begin{align*}
  0=\,\,&\E_x\left[\int_0^{T_1\wedge\tau^{u_f}}e^{-\delta t}\Bigg(c(u_f)f'(X_{t-}^{u_f})-(\delta+\lambda)f(X_{t-}^{u_f})\Bigg.\right.\\
  &+\lambda\int_0^{\rho(X_{t-}^{u_f},u_f)}f(X_{t-}^{u_f}-r(y,u_f))\,dF_Y(y)\\
  &\left.\Bigg.+\lambda\int_{\rho(X_{t-}^{u_f},u_f)}^\infty w(X_{t-}^{u_f},r(y,u_f)-X_{t-}^{u_f})\,dF_Y(y)\Bigg)dt\right]\\
  &=\E_x\left[e^{-\delta( T_1\wedge\tau^{u_f})} f(X_{T_1\wedge\tau^{u_f}}^{u_f})\mathds{1}_{T_1\neq\tau^{u_f}}\right]-f(x)\\
  &+\E_x\left[\int_0^{T_1}e^{-\delta t}\lambda\int_{\rho(X_{t-}^{u_f},u_f)}^\infty w(X_{t-}^{u_f},r(y,u_f)-X_{t-}^{u_f})\,dF_Y(y)dt\,\mathds{1}_{T_1=\tau^{u_f}}\right]\\
  &=\E_x\left[e^{-\delta T_1} f(X_{T_1}^{u_f})\mathds{1}_{T_1<\tau^{u_f}}\right]+\E_x\left[e^{-\delta \tau^{u_f}} f(0)\mathds{1}_{T_1>\tau^{u_f}}\right]-f(x)\\
  &+\E_x\left[\int_0^{T_1}e^{-\delta t}\lambda\int_{\rho(X_{t-}^{u_f},u_f)}^\infty w(X_{t-}^{u_f},r(y,u_f)-X_{t-}^{u_f})\,dF_Y(y)dt\,\mathds{1}_{T_1=\tau^{u_f}}\right].
\end{align*}
\end{small}
We now use the compensation theorem
\[
  \E_x\left[\int_0^{T_1}\lambda e^{-\delta t}H_t\,dt\right]=\E_x\left[\int_0^{T_1}e^{-\delta t} H_t\,dN_t\right]=\E_x\left[e^{-\delta T_1}H_{T_1}\right],
 \]
 where $\lambda$ is the intensity of the counting process $N_t$, for the previsible process
 \[
 H_t:=\int_{\rho(X_{t-}^{u_f},u_f)}^\infty w(X_{t-}^{u_f},r(y,u_f)-X_{t-}^{u_f})\,dF_Y(y).
 \]
 Taking $u_f(0)$ as in the statement of the lemma yields
 \begin{align*}
  f(x)=&\,\,\E_x\left[e^{-\delta T_1}f(X_{T_1}^{u_f})\mathds{1}_{T_1<\tau^{u_f}}\right]+\E_x\left[e^{-\delta T_1}w(X_{T_1-}^{u_f},|X_{T_1}^{u_f}|)\mathds{1}_{T_1=\tau^{u_f}}\right]\\
  &+\E_x\left[e^{-\delta \tau^u}w(0,0)\mathds{1}_{T_1>\tau^{u_f}}\right], 
 \end{align*}
because $\mathds{1}_{T_1>\tau^{u_f}}=0$ if $c(u_f(0))\geq0$. So we showed $f\geq \mathcal{G}f$.\\
$ $\\
On the other hand
 \begin{footnotesize}
 \begin{align*}
  \G f(x)=&\inf_{u\in\U}\left\{\E_x\left[e^{-\delta T_1}f(X_{T_1}^u)\mathds{1}_{T_1<\tau^u_x}\right]+\E_x\left[e^{-\delta T_1}w(X_{T_1-}^u,|X_{T_1}^u|)\mathds{1}_{T_1=\tau^u_x} \right]\right.\\
  &+\left.\E_x\left[e^{-\delta \tau^u_x}w(0,0)\mathds{1}_{T_1>\tau^u_x}\right]\right\}\\ 
 &=\inf_{u\in\U}\left\{f(x)+\E_x\left[\int_0^{T_1\wedge\tau^u}e^{-\delta t}\Bigg(c(u)f'(X_{t-}^{u})-(\delta+\lambda)f(X_{t-}^{u})\Bigg.\right.\right.\\
 &+\left.\left.\lambda\int_0^{\rho(X_{t-}^{u},u)}f(X_{t-}^{u}-r(y,u))\,dF_Y(y)\right)dt\right] \\
 &+\E_x\left[e^{-\delta \tau^u_x}w(0,0)\mathds{1}_{T_1>\tau^u_x}\right]\\
 &+\Bigg.\E_x\left[\int_0^{T_1\wedge\tau^u}e^{-\delta t} \lambda\int_{\rho(X_{t-}^u,u)}^\infty w(X_{t-}^u,r(y,u)-X_{t-}^u)\,dF_Y(y)\,dt\right]\Bigg\}\\
 &\geq f(x)
  \end{align*}
 \end{footnotesize}
 where we again used the compensation theorem for the last expression and the last inequality follows from the HJB equation.\\
\end{proof}
The following Theorem is an immediate consequence of Lemmas \ref{continuity_G} and \ref{fixed_point} combined with Banach's fixed point theorem. It is also the central statement of this section as it establishes the HJB euqation as the crucial tool for finding the value function.
 \begin{theorem}
  In the function space $\mathcal{C}^{+,b}[0,\infty)$, the value function $V$ is the unique fixed point of $\G$ and hence it is also the unique solution to the HJB equation.
 \end{theorem}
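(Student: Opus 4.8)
The plan is to read the statement off the pieces already assembled. Lemma~\ref{continuity_G} says that $\G$ is a contraction on $\mathcal{C}^{+,b}[0,\infty)$; this set, equipped with $\|\cdot\|_\infty$, is a complete metric space, being a closed subset of the Banach space of bounded continuous functions on $[0,\infty)$ (a uniform limit of continuous functions is continuous, and the pointwise bound $f\geq 0$ survives uniform convergence). Hence Banach's fixed point theorem applies and produces a unique $f^\star\in\mathcal{C}^{+,b}[0,\infty)$ with $\G f^\star=f^\star$.

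Next I would check that $V$ is that fixed point. Because $w$ is bounded by $M$, every $\Phi^u$, and therefore $V$, is bounded by $M$; $V\geq 0$ since $w\geq 0$; and $V$ is continuous — indeed Lipschitz — by the boundedness/Lipschitz lemma of Section~1. Thus $V\in\mathcal{C}^{+,b}[0,\infty)$. Applying the dynamic programming principle \eqref{dynamic_pp} with the stopping time $S=T_1$, and splitting according to whether ruin happens at $T_1$ (claim ruin), strictly before $T_1$ (necessarily smooth ruin, contributing the penalty $w(0,0)$), or strictly after $T_1$, is exactly the identity $\G V=V$ remarked upon after Lemma~\ref{continuity_G}. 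Uniqueness of the fixed point then forces $V=f^\star$.

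For the HJB part I would argue in both directions. First, $V$ solves \eqref{HJB} a.e. on $(0,\infty)$ by the HJB lemma of Section~2, and $V(0+)\leq w(0,0)$: starting from a small reserve $x$ one may buy full reinsurance, be carried deterministically to $0$ in time $x/|\pi|$ (with $\pi<0$ the full-reinsurance drift), and collect the penalty $w(0,0)$, so $V(x)\leq e^{-\delta x/|\pi|}w(0,0)\to w(0,0)$ as $x\downarrow 0$. Conversely, if $g\in\mathcal{C}^{+,b}[0,\infty)$ solves \eqref{HJB} a.e. with $g(0)\leq w(0,0)$, then Lemma~\ref{fixed_point} makes $g$ a fixed point of $\G$, whence $g=f^\star=V$. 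Therefore $V$ is the unique solution of the HJB equation inside the class $\{g\in\mathcal{C}^{+,b}[0,\infty):g(0)\leq w(0,0)\}$, which is the assertion.

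The single delicate point is the bookkeeping at $x=0$: the HJB uniqueness holds only within the subclass $\{g(0)\leq w(0,0)\}$, so one must confirm both that $V$ lies in it (the estimate $V(0+)\leq w(0,0)$ above) and that this is precisely the hypothesis Lemma~\ref{fixed_point} requires. Everything else is a routine invocation of Banach's theorem together with Lemmas~\ref{continuity_G} and \ref{fixed_point}.
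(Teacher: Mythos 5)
Your proposal is correct and follows essentially the same route the paper takes: Lemma~\ref{continuity_G} gives the contraction, Banach's fixed point theorem gives a unique fixed point, $\G V=V$ identifies it with the value function, and Lemma~\ref{fixed_point} converts any HJB solution into a fixed point. You additionally spell out several things the paper's one-line proof leaves implicit — completeness of $\mathcal{C}^{+,b}[0,\infty)$ under $\|\cdot\|_\infty$, the verification that $V\in\mathcal{C}^{+,b}[0,\infty)$, and the estimate $V(0+)\leq w(0,0)$ via full reinsurance — and you correctly flag the one genuinely delicate point: the HJB uniqueness holds within the subclass $\{g\in\mathcal{C}^{+,b}[0,\infty):g(0)\leq w(0,0)\}$, since that is the hypothesis of Lemma~\ref{fixed_point}, even though the theorem statement does not repeat it.
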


\section{Numerical Examples}
Following the results in the previous section, we can construct the value function by finding a solution to the Hamilton-Jacobi-Bellman equation. Our method of choice was the \textit{policy iteration} (for a detailled review of applicable methods see e.g. \cite{kushner2013numerical}). In a first step, we discretized the interval $[x_0,x_N]$ where we want to find the solution. Then we started with the generic strategy $u_0$ of no reinsurance and used Monte-Carlo techniques to find the values for $\Phi^{u_0}(x_0)$ and $\Phi^{u_0}(x_N)$. Knowledge of these boundary values then enabled us to numerically solve the integro-differential equation that is given by the Feynman-Kac type equation
\begin{align*}
 0=&\,c(u_0)(\Phi^{u_0})'(x) -(\delta+\lambda)V(x)+\lambda\int_0^{\rho(x,u_0)}\Phi^{u_0}(x-r(y,u_0))\,dF_Y(y)\\
  &+\lambda\int_{\rho(x,u_0)}^\infty w(x,r(y,u_0)-x)\,dF_Y(y)
\end{align*}
as it is derived in Theorem 11.2.3 of \cite{rolski2009stochastic}. Here, we used a finite differences approach. Having calculated $\Phi^{u_0}(x)$ for all $x$ on the grid corresponding to $[x_0,x_N]$ in this manner, we look for an improving strategy by taking
\begin{small}
 \begin{align*}
  u^{(1)}(x)=&\argmin_{u\in\U}\Bigg\{ c(u)(\Phi^{u_0})'(x) -(\delta+\lambda)V(x)\Big.\\
  &+\lambda\int_0^{\rho(x,u)}\Phi^{u_0}(x-r(y,u))\,dF_Y(y)\\
  &+\left.\lambda\int_{\rho(x,u)}^\infty w(x,r(y,u)-x)\,dF_Y(y)\right\}.
 \end{align*}
\end{small}
Now we repeat the procedure with $u^{(1)}$ in place of $u_0$ to construct $u^{(2)},u^{(3)},\dots$ until no significant improvement can be achieved anymore.\\
$ $\\
For referencing, we chose similar parameters as in \cite{schmidli2007stochastic} Chapter 2 for the risk model. That is, we set the Poisson intensity $\lambda$ to $1$ and the interval under consideration to $[0,14]$. The reinsurance shall be of proportional type, i.e. the retention function is given as $r(y,u)=u\cdot y$ for $u\in[0,1]$. Furthermore, we calculated the premiums $c(u)$ following the expected value principle with the cedent's safety loading denoted by $\eta$ and the reinsurer's safety loading $\theta$. So
\[
 c(u)=\lambda\beta(\eta-\theta+u(1+\theta)),
\]
where $\beta$ denotes the expected claim height. In all examples, we set $\eta=0.5$ and $\theta=0.7$.

\subsection{Exponential Claims}
First, we want to consider exponentially distributed claims. Setting the expected claim height to $1$, this means $F_Y(y)=1-e^{-y}$. We start with the very simple penalty function $w_1(x,y)=1$, so we want to minimize the discounted ruin probability. This exact setting was treated in \cite{schmidli2007stochastic} for $\delta=0$. We undertook the calculation for the case $\delta=0.05$ to see the effect of the discount factor on value function and strategy. The resulting strategy and the first $5$ iterations of $\Phi$ are shown in Figures \ref{u_exp_05} and \ref{V_comp_exp_05} respectively. While Figure \ref{u_exp_05} shows clear resemblance to the undiscounted case in \cite{schmidli2007stochastic}, we see that in Figure \ref{V_comp_exp_05}, the difference between the first $3$ Gerber Shiu functions (blue, red, yellow) is still significant, whereas there is almost no difference anymore between functions $3,4$ and $5$ (depicted yellow, purple and green).\\
\begin{figure}[htbp]
	\begin{minipage}{0.5\textwidth} 
	\includegraphics[width=\textwidth]{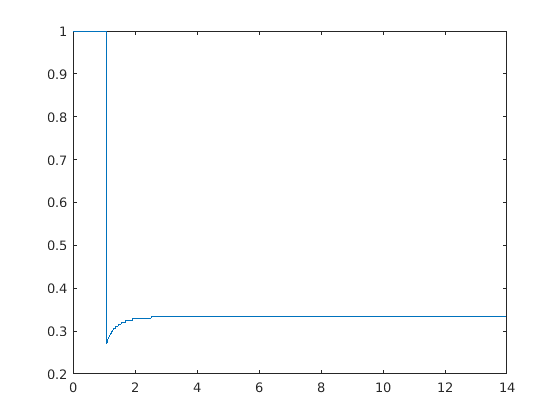}
	\caption{\footnotesize Optimal strategy for exponential claims}
	\label{u_exp_05}
	\end{minipage}
	\hfill
	\begin{minipage}{0.5\textwidth}
	\includegraphics[width=\textwidth]{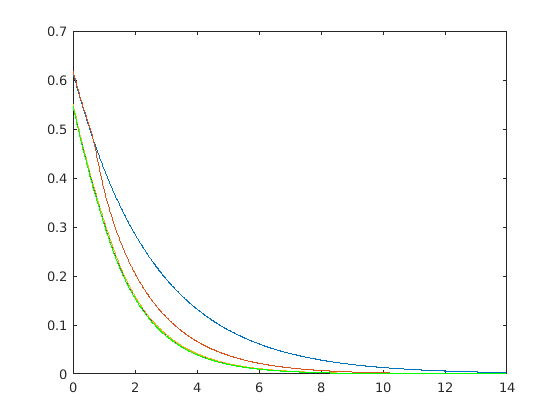}
	\caption{\footnotesize functions $\Phi^{u_1}$ to $\Phi^{u_5}$.\label{V_comp_exp_05}} 
	\end{minipage}
\end{figure}
$ $\\
To show the flexibility of our approach we want to consider a more general penalty function. So we will now use $w_2(x,y)=\min(10^{10},(x+0.5)(y+1)^2)$ and also increase the discounting rate to $\delta=0.1$. This choice of penalty function might seem arbitrary or hypothetical at first, but making the penalty actually depend on the suprlus prior to- and deficit at ruin will trigger the incentive for smooth ruin in some situations. As before, we used policy iteration and stopped when improvements fell under a predefined level. In Figure \ref{hjb_w2_exp}, we plotted the corresponding value of the HJB equation. In the optimum this value is zero, values close to zero indicate a good approximation. The optimal strategy can be seen in Figure \ref{u_w2_exp_08} where the red line is drawn at $0.1176$, the zero of the premium function $c(u)$. So for $u<0.1176$, the total premiums are negative.\\
\begin{figure}[htbp]
	\begin{minipage}{0.5\textwidth} 
	\includegraphics[width=\textwidth]{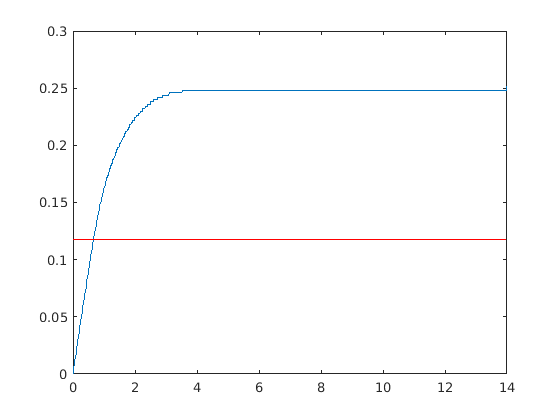}
	\caption{\footnotesize Optimal strategy after 7 iterations, sign change at red line.}
	\label{u_w2_exp_08}
	\end{minipage}
	\hfill
	\begin{minipage}{0.5\textwidth}
	\includegraphics[width=\textwidth]{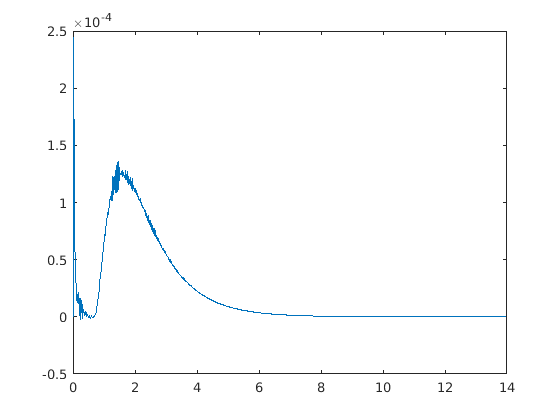}
	\caption{\footnotesize Value of HJB equation.\label{hjb_w2_exp}} 
	\end{minipage}
\end{figure}
\begin{figure}[htbp]
	\begin{center}
	\includegraphics[width=0.66\textwidth]{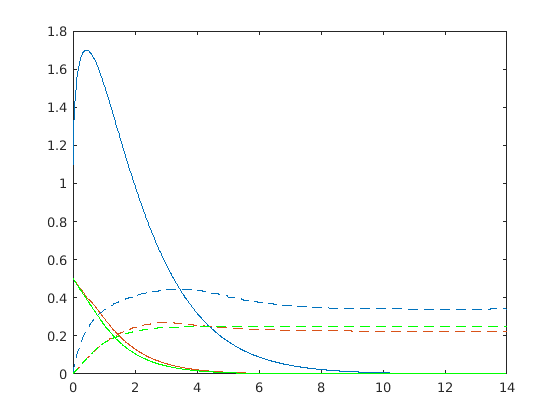}
	\caption{\footnotesize functions $\Phi^{u_2}$, $\Phi^{u_4}$ and $\Phi^{u_6}$ along with strategies $u_2$, $u_4$ and $u_6$.}
	\label{V_u_comb}
	\end{center}
\end{figure}
The resulting strategy is particularly interesting since it leads to smooth ruin. That means, for low reserve values, the insurer prefers deliberately terminating the business and paying the comparably low penalty $w_2(0,0)=0.5$ instead of taking the risk of a much higher penalty. In Figure \ref{V_u_comb}, we show the second (blue), fourth (red) and sixth (green) cost function with the respective minimizing strategies (dashed lines in the corresponding colors).
\subsection{Pareto Claims}
In insurance mathematics, a particular interest lies in the study of heavy-tailed distributions. To account for that, we also investigated the case of pareto distributed claims. For $w_1$, that is the discounted ruin probability, we chose the claim distribution $F_Y(x)=1-(x+1)^{-2}$, resulting again in an expected claim height of $1$. This claim distribution was also used in \cite{schmidli2007stochastic}. The resultig strategy is shown in Figures \ref{u_par_05}, while Figure \ref{V_comp_par_05} gives again the first $5$ cost functions in the order blue, red, yellow, purple and green.\\
\begin{figure}[htbp]
	\begin{minipage}{0.5\textwidth} 
	\includegraphics[width=\textwidth]{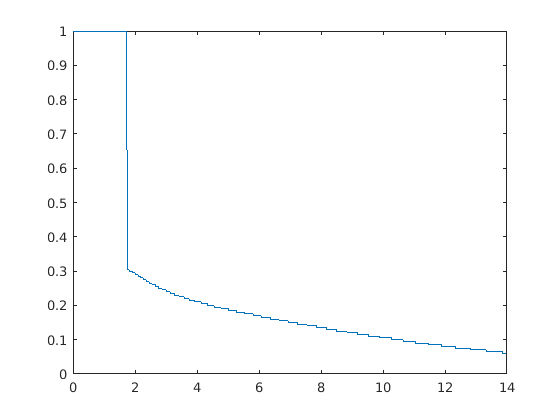}
	\caption{\footnotesize Optimal strategy for Pareto claims}
	\label{u_par_05}
	\end{minipage}
	\hfill
	\begin{minipage}{0.5\textwidth}
	\includegraphics[width=\textwidth]{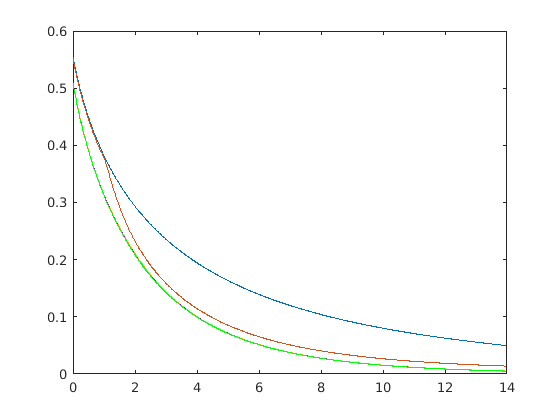}
	\caption{\footnotesize functions $\Phi^{u_1}$ to $\Phi^{u_5}$.\label{V_comp_par_05}}
	\end{minipage}
\end{figure}
$ $\\
As for the exponential case, we also want to find the optimal strategy for pareto distributed claims and the penalty function $w_2$. Since the second moment for pareto distributions exists only for shape parameters greater than $2$, we chose the claim height distribution $F_Y(x)=1-(1+x)^{-3}$. In Figure \ref{u_w2_par_05}, we again added the red line at the zero of $c(u)$. Note that on the whole interval the optimal strategy leads to negative premiums. This can be explained by the heavy tails of the Pareto distribution. At no level of the reserves does the chance to survive but under the risk of a potentially heavy ruin, outweigh the very moderate penalty of $w_2(0,0)e^{-\delta \tau}<0.5$. In Figures \ref{V_w2_comp_par} and \ref{w2_par_u_comp}, we also plotted the second to fith iteration of the value function resp. the corresponding strategy.
\begin{figure}[htbp]
	\begin{minipage}{0.5\textwidth} 
	\includegraphics[width=\textwidth]{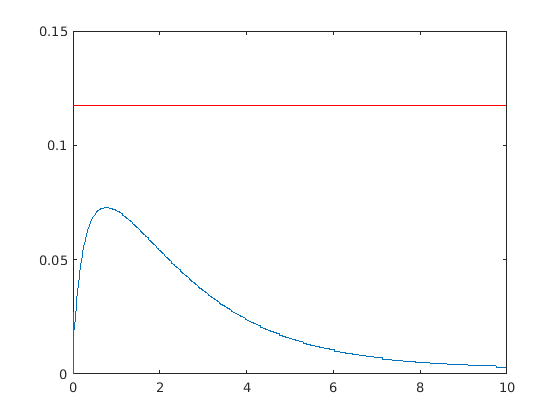}
	\caption{\footnotesize Optimal strategy for Pareto claims}
	\label{u_w2_par_05}
	\end{minipage}
	\hfill
	\begin{minipage}{0.5\textwidth}
	\includegraphics[width=\textwidth]{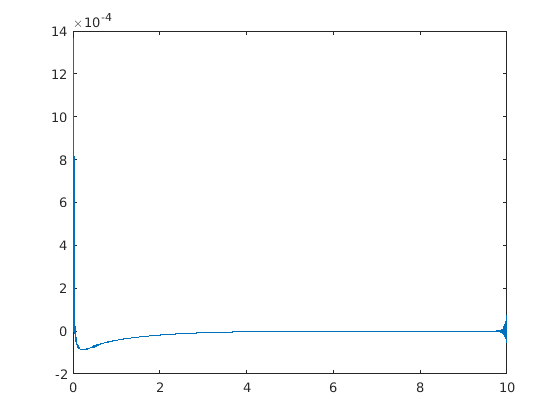}
	\caption{\footnotesize HJB error.\label{hjb_w2_par}} 
	\end{minipage}
	\begin{minipage}{0.5\textwidth}
	\includegraphics[width=\textwidth]{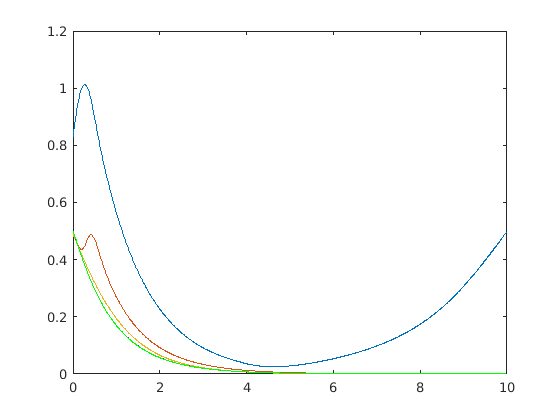}
	\caption{\footnotesize functions $\Phi^{u_2}$ to $\Phi^{u_5}$ .}
	\label{V_w2_comp_par}
	\end{minipage}
	\hfill
	\begin{minipage}{0.5\textwidth}
	 \includegraphics[width=\textwidth]{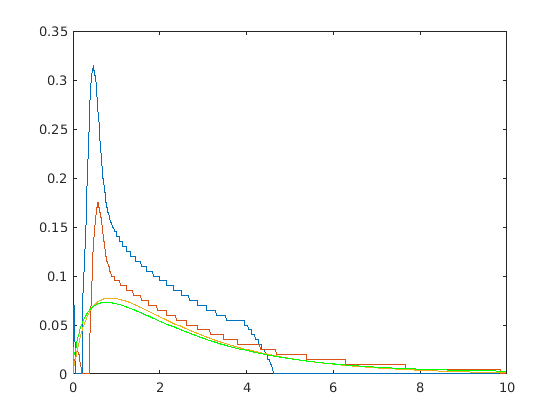}
	 \caption{\footnotesize strategies $u_2$ to $u_5$.\label{w2_par_u_comp}} 
	\end{minipage}
\end{figure}
\newpage
\subsection{A Note on the Numerics}
The calculations that were undertaken for this section turned out to be more laborious than expected. While some cases, like exponential claims without or with low discounting factor or pareto claims without discounting factor didn't make much trouble, other cases, namely the more general penalty function $w_2$ in combination with discount rates and Pareto claims were quite demanding. The reason for this is that the finite differences approach in these cases was extremely sensitive to the right starting value, indeed to an extent where MC techniques could not provide the needed accuracy anymore. Relying on IDE solvers that treat the problem in a more continuous way is not immediately possible, since strategies crossing the zero of $c(u)$ result in singularities in the involved ODE terms.\\
The method that brought the best results was an individually chosen mix of central and backwards differences combined with a MC simulation for an initioal guess, followed by a somewhat manual bisection technique to provide the correct initial values.\\
\subsection{Asymptotic Behaviour}
We also investigated the question of the asymptotically optimal strategy. In the case of exponentially distributed (that means light-tailed) claims, it is straightforward to proceed as in \cite{hald2004maximisation}. One has to keep in mind though that the presence of a discount factor $\delta$ changes the associated Lundberg equation to
\begin{equation}\label{lundberg}
 \lambda  \hat{m}_Y(\alpha)=1+\frac\delta\lambda+\frac{\alpha c}{\lambda} 
\end{equation}
where $\hat{m}_Y(\alpha)=\E\left[e^{\alpha Y}\right]$ is the moment-generating function of the claim height distribution $F_Y$. The positive solution $\gamma$ for which \eqref{lundberg} becomes zero (if such a solution exists) is usually called the adjustment coefficient. Now consider the Cram\'{e}r-Lundberg approximation for $\Psi(x)$, the ruin probability with initial value $x$, which reads
\begin{equation}\label{CLapprox}
 \lim_{x\to\infty} \Psi(x) e^{x\gamma} = C_\delta
\end{equation}
for some constant $C_\delta$. From \eqref{CLapprox}, it becomes clear that maximizing the adjustment coefficient by means of the reinsurance parameter will lead to the maximally fast asymptotical decay rate for the (discounted) ruin probability. This approach goes back to \cite{waters1983some} So if we now assume a constant reinsurance strategy $u$, proportional reinsurance and premiums calculated by the expected value principle as above, equation \eqref{lundberg} becomes
\begin{align*}
 \lambda(\hat{m}_Y(u \gamma)-1)-\delta-(\lambda\beta(1+\eta)-(1-u)(1+\theta)\lambda\beta)\gamma=0.
\end{align*}
Concavity arguments, differentiating and recollecting terms as in \cite{schmidli2007stochastic} now yield the following asymptotically optimal control strategy.
\begin{equation}\label{asyoptstr}
 u^*=\frac{\lambda(\theta-\eta)\left(1-\sqrt{\frac{1}{1+\theta}}\right)}{\delta+2\lambda(1-\sqrt{1+\theta})+\theta\lambda}.
\end{equation}
It is, perhaps, a little surprising that for exponential claims, the optimal strategy does not depend on $\beta$, the expectation of $F_Y$. If we calculate $u^*$ for $\delta=0.05$ and $\lambda=1, \eta=0.5$ and $\theta=0.7$ as above, we get $u^*_{0.05}=0.3275$ which is also indicated by Figure \ref{u_exp_05}.\\
$ $\\
Another very interesting fact is that the asymptotically optimal strategy does not depend on the actual penalty function $w$ as well. This might seem counterintuitive at first, but using material from \cite{asmussen2010ruin}, we see that for a constant strategy $u$
\[
 \lim_{x\to\infty}\Phi^u(x)e^{\gamma(u) x}=C_{\delta,w}.
\]
So only the constant $C_{\delta,w}$ depends on the penalty function $w$, while the asymptotic behaviour is governed by the adjustment coefficient just as in the case of the discounted ruin probability. The reason for this is of course the indicator function for ruin in the Gerber-Shiu function; for high starting values, ruin is just unlikely to occur. Evaluating \ref{asyoptstr} for $\delta=0.1$ yields the asymptotically optimal strategy $u^*_{0.1}=0.2423$ which is confirmed by Figure \ref{u_w2_exp_08}. So for the same values of $\delta$, Figures \ref{u_exp_05} and \ref{u_w2_exp_08} converge to the same level.\\
\vspace{2em}
\centerline{\underline{\hspace*{17cm}}}

\noindent Michael Preischl \Letter\\
Institute for Analysis and Number Theory, Graz University of Technology, Kopernikusgasse 24/II, 8010 Graz, Austria\\
preischl@math.tugraz.at\\
$ $\\
\noindent Stefan Thonhauser\\
Institute for Statistics, Graz University of Technology, Kopernikusgasse 24/III, 8010 Graz, Austria\\
stefan.thonhauser@math.tugraz.at

\bibliography{opt_re_GS.bib}
\bibliographystyle{plainnat}
\end{document}